\newtheorem{thm}{Theorem}[section]
\theoremstyle{notation}
\newcommand{\R}{\mathbb{R}}
\numberwithin{equation}{section}
\newcommand{\eps}{\epsilon}
\makeatletter \@addtoreset{equation}{section} \makeatother
\newcounter{const}
\author[{}]{Tianxiang Gou$^{\MakeLowercase a}$, Vicen\c tiu D. R\u adulescu$^{\MakeLowercase{b,c}}$}
\address[T. Gou]{%
	\centerline{$^a$School of Mathematics and Statistics, Xi’an Jiaotong University,}
	\centerline{710049 Xi’an, Shaanxi, China}}
\address[V.D. R\u adulescu]{%
	\centerline{$^b$Faculty of Applied Mathematics, AGH University of Krak\'ow,}
	\centerline{30-059 Krak\'ow, Poland}
	\centerline{$^c$Department of Mathematics, University of Craiova,}
	\centerline{200585 Craiova, Romania}}
\subjclass[2010]{35A02; 35R11}
\keywords{Uniqueness, Positive Solutions, Harmonic potential, Fractional elliptic equations}
\email{tianxiang.gou@xjtu.edu.cn}
\email{radulescu@inf.ucv.ro}
\title[Uniqueness of positive solutions]{Uniqueness of positive solutions to fractional nonlinear elliptic equations with harmonic potential}
\begin{document}

\begin{abstract}
In this paper, we establish the uniqueness of positive solutions to the following fractional nonlinear elliptic equation with harmonic potential
\begin{align*}
(-\Delta)^s u+ \left(\omega+|x|^2\right) u=|u|^{p-2}u \quad \mbox{in}\,\, \R^n,
\end{align*}
where $n \geq 1$, $0<s<1$, $\omega>-\lambda_{1,s}$, $2<p<\frac{2n}{(n-2s)^+}$, and $\lambda_{1,s}>0$ is the lowest eigenvalue of the operator $(-\Delta)^s + |x|^2$. This solves an open question raised in \cite{SS} concerning the uniqueness of solutions to the equation.
%The fractional Laplacian $(-\Delta)^s$ is characterized as $\mathcal{F}((-\Delta)^{s}u)(\xi)=|\xi|^{2s} \mathcal{F}(u)(\xi)$ for $\xi \in \R^n$, where $\mathcal{F}$ denotes the Fourier transform. 
\end{abstract}

\maketitle

\thispagestyle{empty}

\section{Introduction}

In this paper, we are concerned with the uniqueness of positive solutions to the following fractional nonlinear elliptic equation with harmonic potential,
\begin{align} \label{equ}
(-\Delta)^s u+ \left(\omega+|x|^2\right) u=|u|^{p-2}u \quad \mbox{in}\,\, \R^n,
\end{align}
where $n \geq 1$, $0<s<1$, $\omega>-\lambda_{1,s}$, $2<p<2_s^*:=\frac{2n}{(n-2s)^+}$ and $\lambda_{1,s}>0$ is the lowest eigenvalue of $(-\Delta)^s + |x|^2$ given by
\begin{align} \label{eigenvalue}
\lambda_{1, s}:=\inf \left\{\int_{\R^n} |(-\Delta)^{\frac s2} u|^2 \,dx + |x|^2|u|^2 \,dx  :  u \in \Sigma_s, \int_{\R^n} |u|^2 \,dx =1 \right\},
\end{align}
$$
\Sigma_s:=H^s(\R^n) \cap L^2(\R^n, |x|^2 \, dx).
$$
Here the fractional Laplacian $(-\Delta)^s$ is defined by
$$
(-\Delta)^s u(x):=C(n, s) P.V. \int_{\R^n} \frac{u(x)-u(y)}{|x-y|^{n+2s}} \,dy, \quad C(n, s):=\frac{s2^{2s}\Gamma\left(\frac{n+2s}{2}\right)}{\pi^{\frac{n}{2}}\gamma(1-s)}. 
$$
The Sobolev space $H^s(\R^n)$ is defined by the completion of $C^{\infty}_0(\R^n)$ under the norm
$$
\|u\|:=\|u\|_2 + [u]_s,
$$
where
$$
[u]_s:=\left(\frac{C(n, s)}{2} \int_{\R^n}  \int_{\R^n} \frac{|u(x)-u(y)|^2}{|x-y|^{n+2s}} \,dx dy \right)^{\frac 12}.
$$
%characterized as $\mathcal{F}((-\Delta)^{s}u)(\xi)=|\xi|^{2s} \mathcal{F}(u)(\xi)$ for $\xi \in \R^n$, where $\mathcal{F}$ denotes the Fourier transform defined by
%$$
%\mathcal{F}(u)(\xi):=\int_{\R^n} e^{-2 \pi \textnormal{i} x \cdot \xi} u(x)\,dx.
%$$
%For $0<s<1$, the fractional Sobolev space $H^s(\R^n)$ is defined by
%$$
%H^s(\R^n):=\left\{u\in L^2(\R^n) \, : \, \int_{\R^n}\left(1+|\xi|^{2s}\right) |\mathcal{F}(u)|^2 \, d\xi<\infty\right\}
%$$
%equipped with the norm
%$$
%\|u\|^2_{H^s}=\int_{\R^n}\left(1+|\xi|^{2s}\right) |\mathcal{F}(u)|^2 \, d\xi.
%$$
Equation \eqref{equ} appears in the study of standing waves to the following fractional nonlinear Schr\"odinger equation with harmonic potential,
\begin{align} \label{equt}
\textnormal{i} \partial_t \psi+(-\Delta)^s \psi+ |x|^2 \psi=|\psi|^{p-2}\psi \quad \mbox{in}\,\, \R \times \R^n,
\end{align}
where a standing wave to \eqref{equt} is a solution of the form
$$
\psi(t,x)=e^{-\textnormal{i}\omega t} u(x), \quad \omega \in \R.
$$
Equation \eqref{equt} is of particular interest in fractional quantum mechanics, where $\psi$ denotes the probability density function of a particle trapped inside a trapping potential well modeled by $|x|^2$.

Following the early work due to Laskin \cite{L1,L2}, the study of the following nonlinear elliptic equation driven by the fractional Laplacian has attracted much attention in recent decades,
\begin{align} \label{equ11}
(-\Delta)^s u+ V(x) u=|u|^{p-2}u \quad \mbox{in}\,\, \R^n,
\end{align}
where $n \geq 1$, $0<s<1$ and $2<p<2^*_s$. %Here we shall focus on the case $V(x)=\omega>0$. 
The existence of solutions to \eqref{equ11} can be straightforwardly achieved with the help of minimax arguments, see for example \cite{W}. Mathematically, it is important and fundamental to investigate the uniqueness of solutions to \eqref{equ11}. Let  $V(x)=\omega>0$. When $s=1$, by applying shooting method, the uniqueness of positive solutions to \eqref{equ11} was established by Kwong \cite{Kw}. While $0<s<1$, the consideration of the uniqueness of solutions to \eqref{equ} is challengeable, due to the fractional Laplacian operator is nonlocal and ODE techniques adapted to deal with the case $s=1$ are no longer available. It was first proved by Amick and Toland in \cite{AT} that  \eqref{equ11} has a unique positive solution for $n=1$, $s=1/2$ and $p=3$. Later, Fall and Valdinoci in \cite{FV} established the uniqueness of positive solutions to \eqref{equ11} for $s$ close to $1$. The result has been extended by Frank, Lenzmann and Silvestre \cite{FL, FLS}, who obtained the uniqueness of positive solutions with Morse index one to \eqref{equ11} for any $0<s<1$. Very recently, Fall and Weth in \cite{FW} further showed that \eqref{equ11} admits a unique positive solution.

Observe that $\Sigma_s $ is compactly embedded into $L^q(\R^n)$ for any $2 \leq q<2^*_s$. Then, by using minimax arguments in \cite{W}, one can simply derive the existence of solutions to \eqref{equ}. However, the uniqueness of solutions to \eqref{equ} is a delicate issue. When $s=1$, by applying ODE techniques due to Yanagida and Yotsutani \cite{YY}, the uniqueness of positive solutions to \eqref{equ} was established in \cite{HO1, HO2}. While $0<s<1$, the uniqueness of positive solutions to \eqref{equ} is unknown for a long time. Recently, by employing the approach developed in \cite{FL, FLS}, Gou in \cite{Gou} successfully derived that \eqref{equ} has a unique ground state. However, the uniqueness of positive solutions to \eqref{equ} is unknown so far. As an extension of the result in \cite{Gou}, the aim of the present paper is to establish the uniqueness of positive solutions to \eqref{equ}.

\begin{thm} \label{thm}
Let $n \geq 1$, $0<s<1$, $\omega>-\lambda_{1,s}$ and $2<p<2_s^*$. Then problem \eqref{equ} admits a unique positive solution in $\Sigma_s$.
\end{thm}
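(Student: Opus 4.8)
The plan is to analyze the linearized operator at an arbitrary positive solution and then run a continuation argument anchored at the local case $s=1$, in the spirit of \cite{FL,FLS,FW,Gou}.

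\textbf{Step 1 (qualitative properties; the operator $L_-$).} First I would show that any positive $u\in\Sigma_s$ solving \eqref{equ} is $C^\infty$ (bootstrapping fractional elliptic regularity, as $\om+|x|^2$ is smooth), strictly positive on $\R^n$ (strong maximum principle for $(-\De)^s$), radially symmetric about the origin and strictly decreasing in $|x|$ (moving plane method for $(-\De)^s$, using that $\om+|x|^2$ is radial and nondecreasing in $|x|$), and decays polynomially, with $|\nabla u|$ decaying one power faster; in particular $u,\pa_i u\in\Sigma_s$. I would then observe that $L_-:=(-\De)^s+\om+|x|^2-u^{p-2}$ has compact resolvent (since $|x|^2\to\infty$ while $u^{p-2}\to0$) and satisfies $L_-u=0$ with $u>0$; hence $0$ is its lowest, simple eigenvalue, so $L_-\ge0$, $\ker L_-=\R u$, and $L_-$ has a spectral gap above $0$.

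\textbf{Step 2 (positivity of $L_+$ off the radial sector).} Put $L_+:=(-\De)^s+\om+|x|^2-(p-1)u^{p-2}$. Under the spherical-harmonics splitting of $L^2(\R^n)$, $(-\De)^s$ preserves each sector $\cH_\ell$ and acts there as a radial operator $(-\De)^s_\ell$ with $(-\De)^s_\ell\ge(-\De)^s_{\ell'}$ whenever $\ell\ge\ell'$. Differentiating \eqref{equ} gives $L_+(\pa_i u)=-2x_iu$, i.e.\ on radial profiles in the $\ell=1$ sector $L_+^{(1)}(-u')=2r\,u(r)>0$ for $r>0$; since $-u'>0$ on $(0,\infty)$, Barta's inequality yields $\lambda_1\big(L_+^{(1)}\big)\ge\inf_{r>0}\frac{2r\,u(r)}{-u'(r)}>0$ — the infimum is positive because the ratio tends to $2u(0)/(-u''(0))>0$ as $r\to0^+$, is strictly positive on $(0,\infty)$, and tends to $+\infty$ as $r\to\infty$. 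Therefore $L_+^{(\ell)}>0$ for every $\ell\ge1$, so all negative eigenvalues of $L_+$ and all of $\ker L_+$ lie in the radial sector.

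\textbf{Step 3 (non-degeneracy — the crux).} The main point is to prove $\ker L_+^{\mathrm{rad}}=\{0\}$, hence $\ker L_+=\{0\}$ in $\Sigma_s$. I would pass to the Caffarelli--Silvestre extension, recast $L_+^{\mathrm{rad}}\phi=0$ as a degenerate-elliptic problem in a half-space, and run a Sturm/nodal-comparison analysis of radial solutions: using that $u>0$ is strictly decreasing, that $L_+^{\mathrm{rad}}u=-(p-2)u^{p-1}\le0$, the spectral facts on $L_-^{\mathrm{rad}}$ from Step 1, and the decay of $u$, one controls the sign changes of a hypothetical kernel element and forces it to vanish. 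This adapts the non-degeneracy arguments of \cite{FL,FLS,Gou}; the new difficulty is that those works treat only ground states, for which extra variational structure is available, whereas here $u$ is an arbitrary positive solution and, the harmonic potential breaking scaling invariance, the usual scaling-derivative comparison function is unavailable and must be replaced by $u$ itself together with a suitably constructed auxiliary function (formally the $\om$-derivative of the solution) satisfying $L_+\psi=-u$. I expect this step to be the main obstacle.

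\textbf{Step 4 (uniqueness by continuation in $s$).} Granting non-degeneracy, I would finish by homotopy to $s=1$, where \eqref{equ} has a unique positive solution by the ODE analysis of \cite{HO1,HO2}, itself non-degenerate. Join $(s,\om)$ to some $(1,\om_1)$ with $\om_1>-\lambda_{1,1}$ by a path in the connected region $\{(\sigma,w):w>-\lambda_{1,\sigma}\}$; the constraint $p<2_\sigma^*$ is automatic for $\sigma\ge s$, since $\sigma\mapsto2_\sigma^*$ is nondecreasing. Realize positive solutions along the path as fixed points $u=\cK_{\sigma,w}(u):=\big((-\De)^\sigma+w+|x|^2\big)^{-1}\big(u^{p-1}\big)$ in the cone of positive functions (in a function space common to all $\sigma$ in the range); $\cK_{\sigma,w}$ is compact and depends $C^1$ on $(\sigma,w)$, the $\sigma$-dependence handled via the extension. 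By Step 3, $I-\cK$ is nondegenerate along the solution set, so the positive solutions over the path form a $C^1$ curve that is locally a graph over the path parameter; the uniform a priori lower bound $\|u\|_{L^\infty}\ge(\lambda_{1,\sigma}+w)^{1/(p-2)}$ (test \eqref{equ} against the first eigenfunction of $(-\De)^\sigma+w+|x|^2$) together with boundedness from above (a blow-up rescaling around the origin would produce a bounded positive solution of $(-\De)^{\sigma_*}v=v^{p-1}$ on $\R^n$, impossible since $p<2_{\sigma_*}^*$, by the fractional Liouville theorem) make the projection of the positive-solution set onto the path parameter a proper local homeomorphism, hence a covering of an interval, hence trivial. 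As the fiber over $s=1$ is a singleton, so is the fiber over $(s,\om)$, which is the assertion of Theorem \ref{thm}.
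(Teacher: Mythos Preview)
Your outline matches the paper in Steps~1 and~4: radial symmetry/decrease by moving planes, and then a continuation in $s$ to the local endpoint $s=1$ (which is exactly what is cited from \cite{Gou}). The substantive divergence is in the non-degeneracy argument.

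\textbf{The gap is Step~3.} You reduce $\ker L_+$ to the radial sector (Step~2) and then propose to kill $\ker L_+^{\mathrm{rad}}$ by a Sturm/nodal analysis in the Caffarelli--Silvestre extension, ``adapting \cite{FL,FLS,Gou}''. Those arguments, however, are not available here without an a~priori Morse-index-one assumption: in \cite{FL,FLS,Gou} the input is that $u$ is a ground state, which pins down the number of sign changes the comparison functions can have and makes the oscillation count go through. For an \emph{arbitrary} positive solution you do not know the radial Morse index is $1$, and your Step~2 only says nothing bad happens for $\ell\ge1$; it gives no control on how many negative eigenvalues $L_+^{\mathrm{rad}}$ carries. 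Your proposed auxiliary $\psi$ with $L_+\psi=-u$ (the formal $\omega$-derivative) is circular: solvability of $L_+\psi=-u$ requires $u\perp\ker L_+$, which is precisely what you are trying to prove. So as written Step~3 is a plan, not an argument, and the known techniques it invokes need exactly the hypothesis you are trying to remove.

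\textbf{How the paper closes this gap.} The paper never isolates the radial sector. Instead it passes to the weighted eigenvalue problem $(-\Delta)^s v+(\omega+|x|^2)v=\Lambda\,|u|^{p-2}v$, for which $\Lambda_1=1$ with eigenfunction $u$, and reduces non-degeneracy to the single inequality $\Lambda_2>p-1$. Two ingredients, both new relative to \cite{FL,FLS,Gou}, yield this: (i) a polarization argument \`a la \cite{BBDG} shows that every second eigenfunction is \emph{non-radial}; (ii) the Picone-type inequality of \cite{FW}, tested with the odd function $\varphi=-\nabla u\cdot\mu$ (which solves $L_+\varphi=-2(x\cdot\mu)u$), forces $\psi_\mu:=\tfrac12\big(v-v\circ\sigma_\mu\big)=0$ for every direction $\mu$ whenever $\Lambda_2\le p-1$, contradicting (i). Thus $\Lambda_2>p-1$ and $\ker L_+=\{0\}$, with no separate treatment of the radial kernel and no Morse-index input. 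Your Step~2 is morally the ``$\ell\ge1$ half'' of this, but the paper's route makes the radial half unnecessary rather than leaving it as an obstacle.
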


%The proof of Theorem \ref{thm} is inspired by \cite{FW, Gou}. 
To prove Theorem \ref{thm}, relying on the elements attained in \cite{Gou}, it suffices to show that any positive solution to \eqref{equ} is non-degenerate. The non-degeneracy of ground states to \eqref{equ} was derived in \cite{SS}, which has been used to establish the uniqueness of ground states to \eqref{equ} in \cite{Gou}. Nevertheless, arguments proposed in \cite{SS} are not available to verify the non-degeneracy of positive solutions to \eqref{equ}. To gain the non-degeneracy of positive solutions to \eqref{equ}, we are inspired by ingredients developed in \cite{BBDG, FW}. Let $u \in \Sigma_s$ be a positive solution to \eqref{equ}. The aim is to demonstrate that $u$ is non-degenerate, i.e. $Ker [\mathcal{L}_{+, s}]=0$, where
$$
\mathcal{L}_{+, s}=(-\Delta)^s + \left(\omega+|x|^2\right) -(p-1)|u|^{p-2},
$$
see Theorem \ref{nondegeneracy}. To attain the desired conclusion, we begin with proving that any positive solution to \eqref{equ} is radially symmetric and decreasing, see Theorem \ref{symmetry}, the proof of which is based on moving planes method. However, due to the presence of the harmonic potential, the procedure carried out in the current context is different from the one conducted in \cite{FQT}, where the radial symmetry of positive solutions to \eqref{equ11} with $V(x)=\omega>0$ was revealed. Next we need to introduce the following eigenvalue problem,
\begin{align} \label{eque} 
(-\Delta)^s v +\left(\omega +|x|^2\right) v =\Lambda |u|^{p-2} v, \quad v \in \Sigma_s.
\end{align}
It is not difficult to find that \eqref{eque} possesses a sequence of eigenvalues $0<\Lambda_1<\Lambda_2 \leq \Lambda_3\leq \cdots$, because $\Sigma_s $ is compactly embedded into $L^q(\R^n)$ for any $2 \leq q<2^*_s$. Observe that $\Lambda_1$ is simple and any eigenfunction corresponding to $\Lambda_1$ is positive. And eigenfunctions corresponding to other eigenvalues are orthogonal to the eigenfunctions corresponding to $\Lambda_1$ with respect to the scalar product in $L^2(\R^n, |u|^{p-2} \,dx)$, which are indeed sign-changing. Since $u \in \Sigma_s$ is a positive solution to \eqref{equ}, then we can conclude that $\Lambda_1=1$. At this point, the key element is to check that $\Lambda_2>p-1$. If this is true, then we know that $Ker [\mathcal{L}_{+, s}]=0$. If $Ker [\mathcal{L}_{+, s}] \neq 0$, then we would get that $p-1$ is an eigenvalue to \eqref{eque}. However, this is impossible, because $\Lambda_1$ is the first eigenvalue and $\Lambda_2$ is the second one to \eqref{eque}. And there exists no eigenvalues between $\Lambda_1$ and $\Lambda_2$. To deduce that $\Lambda_2>p-1$, we shall argue by contradiction that $\Lambda_2 \leq p-1$. In order to reach a contradiction, we first need to infer that any eigenfunction corresponding to $\Lambda_2$ is non-radial, the proof of which is beneficial from the polarization arguments presented in \cite{BBDG}. Then, making use of the Picone type result established in \cite{FW}, we are able to reach a contradiction. This completes the proof.

\section{Proof of Theorem \ref{thm}}

In this section, we are going to establish Theorem \ref{thm}. To do this, we first prove the radial symmetry and decrease of any positive solution to \eqref{equ}. In what follows, we shall use the symbol $X \lesssim Y$ to denote that $X \leq C Y$ for some proper constant $C>0$.

\begin{thm} \label{symmetry}
Let $n \geq 1$, $0<s<1$, $\omega>-\lambda_{1,s}$ and $2<p<2_s^*$. Then any positive solution to \eqref{equ} is radially symmetric and decreasing.
\end{thm}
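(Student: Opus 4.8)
The plan is to use the moving planes method adapted to the nonlocal setting, following the scheme of the classical results for $(-\Delta)^s u + \omega u = u^{p-1}$ but accounting for the harmonic potential $|x|^2$, which (unlike the constant potential) is \emph{not} translation invariant but \emph{is} invariant under reflections through any hyperplane containing the origin — and, crucially, is monotone increasing in $|x|$. First I would set up the notation: for a direction $e \in S^{n-1}$ and $\lambda \in \R$ let $T_\lambda = \{x \cdot e = \lambda\}$, $\Sigma_\lambda = \{x \cdot e > \lambda\}$, let $x^\lambda$ denote the reflection of $x$ across $T_\lambda$, and set $u_\lambda(x) = u(x^\lambda)$ and $w_\lambda = u_\lambda - u$. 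Using the integral representation of $(-\Delta)^s$ and splitting the singular integral over $\Sigma_\lambda$ and its reflection, one obtains the standard pointwise identity
\begin{align*}
(-\Delta)^s w_\lambda(x) = C(n,s)\,\mathrm{P.V.}\!\int_{\Sigma_\lambda}\!\left(\frac{1}{|x-y|^{n+2s}}-\frac{1}{|x-y^\lambda|^{n+2s}}\right)\!\big(w_\lambda(x)-w_\lambda(y)\big)\,dy
\end{align*}
for $x \in \Sigma_\lambda$, where $|x - y^\lambda| > |x-y|$ when $x, y \in \Sigma_\lambda$, so the kernel is positive.

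Next I would derive the differential inequality satisfied by $w_\lambda$ on the set $\Sigma_\lambda^- = \{x \in \Sigma_\lambda : w_\lambda(x) < 0\}$. Subtracting the equations for $u_\lambda$ and $u$ and using that $|x^\lambda|^2 \le |x|^2$ for $x \in \Sigma_\lambda$ with $\lambda \ge 0$ (more precisely $|x^\lambda|^2 - |x|^2 = -4\lambda(x\cdot e - \lambda) \le 0$), I get
\begin{align*}
(-\Delta)^s w_\lambda + \big(\omega + |x|^2\big) w_\lambda = (p-1)\xi_\lambda^{p-2} w_\lambda + \big(|x^\lambda|^2 - |x|^2\big)u_\lambda,
\end{align*}
where $\xi_\lambda(x)$ lies between $u(x)$ and $u_\lambda(x)$ by the mean value theorem. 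On $\Sigma_\lambda^-$ the last term is $\le 0$ (it has the sign of $|x^\lambda|^2-|x|^2 \le 0$ times $u_\lambda>0$), which is exactly the right sign to help the argument rather than obstruct it. The first stage is to show $w_\lambda \ge 0$ in $\Sigma_\lambda$ for $\lambda$ large: here one uses the decay of $u$ (any positive $\Sigma_s$-solution decays, and in fact the harmonic confinement forces fast decay), applies the nonlocal maximum principle / a Hardy–Littlewood–Sobolev or $L^\infty$ estimate on $\Sigma_\lambda^-$ to conclude $\|w_\lambda^-\|$ is controlled by a small multiple of itself when $\lambda$ is large, forcing $\Sigma_\lambda^- = \emptyset$. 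Then define $\lambda_0 = \inf\{\lambda : w_\mu \ge 0 \text{ in } \Sigma_\mu \text{ for all }\mu \ge \lambda\}$ and run the continuation step: one must rule out $\lambda_0 > 0$. By continuity $w_{\lambda_0} \ge 0$; a strong maximum principle for the nonlocal operator gives $w_{\lambda_0} > 0$ in $\Sigma_{\lambda_0}$ (unless $w_{\lambda_0}\equiv 0$, which is excluded because the inhomogeneous term $(|x^{\lambda_0}|^2-|x|^2)u_{\lambda_0}$ is strictly negative somewhere in $\Sigma_{\lambda_0}$ when $\lambda_0>0$ — this is the point where the potential genuinely helps and actually \emph{forces} $\lambda_0=0$). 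For $\lambda$ slightly below $\lambda_0$ one then shows $\Sigma_\lambda^-$ must be empty, again by a measure/smallness argument combined with the positivity just obtained, contradicting the definition of $\lambda_0$. Hence $\lambda_0 = 0$, i.e. $u(x^0) \ge u(x)$ for $x\cdot e>0$; applying the same argument in direction $-e$ gives equality, so $u$ is symmetric across $T_0$. Since $e$ was arbitrary and all hyperplanes through the origin are admissible, $u$ is radially symmetric about the origin; and running the argument for general $\lambda>0$ (where now $\lambda_0=0$ is already known and $w_\lambda \geq 0$ with strict inequality by the strong maximum principle on $\Sigma_\lambda$) yields $u(x^\lambda) > u(x)$ for $x\cdot e > \lambda > 0$, which is precisely strict radial monotonicity: $u$ is decreasing in $|x|$.

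The main obstacle, and the place where this differs from \cite{FQT}, is that the classical moving-planes argument for the constant-potential equation relies on translation invariance to start planes ``at infinity'' symmetrically and to get a free direction; here the potential breaks translation invariance, so one cannot move planes in an arbitrary direction, only through the origin, and one must verify (i) that the inhomogeneous term $(|x^\lambda|^2-|x|^2)u_\lambda$ always carries the favorable sign on $\Sigma_\lambda^-$ for $\lambda\ge 0$, and (ii) that it is \emph{strictly} negative on a set of positive measure when $\lambda>0$, which is what pins $\lambda_0$ to $0$ rather than merely giving $\lambda_0 \le$ something. A secondary technical point is establishing enough decay of $u$ (and of $(-\Delta)^s u$) to justify the nonlocal maximum principle and the smallness estimates on unbounded regions $\Sigma_\lambda$ — this should follow from the equation, the $\Sigma_s$ membership, and a bootstrap/Moser iteration using the compact embedding, together with the fact that $(\omega + |x|^2) \to \infty$, which only improves matters. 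Once these are in place the argument closes cleanly and simultaneously delivers both the symmetry and the strict monotonicity.
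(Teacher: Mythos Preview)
Your proposal is correct and follows essentially the same moving-planes strategy as the paper: both exploit that the harmonic term contributes with the favorable sign on the relevant half-space, start the planes at infinity using the polynomial decay $u(x)\lesssim |x|^{-(n+2s)}$, run a smallness/continuation argument, and use the \emph{strict} sign of $(|x^\lambda|^2-|x|^2)u_\lambda$ for $\lambda\neq 0$ to force the critical position to be $\lambda_0=0$. The only cosmetic differences are that the paper moves planes from $\lambda=-\infty$ up to $0$ (proving $u_\lambda\le u$ on $\{x_1>\lambda\}$) whereas you move from $\lambda=+\infty$ down to $0$, and the paper carries out the smallness step via integral estimates---testing the equation against the antisymmetric extension of $(u_\lambda-u)^+$ and using the $\Sigma_s$-embedding---rather than the pointwise nonlocal maximum-principle formulation you sketch; both variants are standard and lead to the same conclusion.
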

\begin{proof}
Let $u \in \Sigma_s$ be a positive solution to \eqref{equ}. In order to prove that $u$ is radially symmetric and decreasing, we shall make use of moving planes method. Define
$$
\Sigma_{\lambda}:=\left\{x\in \R^n : x_1 > \lambda\right\},
\quad
T_{\lambda}:=\left\{x\in \R^n : x_1 = \lambda\right\},
\quad
u_{\lambda}(x):=u(x_{\lambda}), %\quad w_{\lambda}(x):=u_{\lambda}(x)-u(x), 
$$
where $x_{\lambda}:=(2\lambda-x_1, x_2, \cdots, x_n)$. By proceeding bootstrap procedure, we can derive that $u \in C^{\infty}(\R^n) \cap L^{\infty}(\R^n)$ and $u(x) \to 0$ as $|x| \to \infty$. It then follows that there exists $R>0$ such that
$$
(-\Delta)^s u+ \frac {\alpha}{2} u \leq 0, \quad |x|>R, 
$$
where $\alpha>0$. This along with \cite[Lemma 4.3]{FQT} and maximum principle gives that
\begin{align} \label{decay}
0<u(x) \lesssim \frac{1}{|x|^{n+2s}}, \quad x \in \R^n.
\end{align}
Now define
\begin{align} \label{defl}
\lambda_0:=\sup_{\lambda \leq 0} A_{\lambda},  \quad A_{\lambda}:=\left\{\lambda \leq 0 : u_{\lambda}  \leq u \,\, \mbox{in} \,\, \Sigma_{\lambda}\right\}.
\end{align}
First we verify that the set $A_{\lambda}$ is not empty. For convenience, we define
$$
w_{\lambda}(x):=\left\{
\begin{aligned}
&\left(u_{\lambda}-u\right)^+(x), \quad x \in \Sigma_{\lambda},\\
&\left(u_{\lambda}-u\right)^-(x), \quad x \in \R^n \backslash \Sigma_{\lambda},
\end{aligned}
\right.
$$
where
$$
\left(u_{\lambda}-u\right)^+(x):=\max \left\{(u_{\lambda}-u)(x), 0\right\}, \quad \left(u_{\lambda}-u\right)^-(x):=\min \left\{(u_{\lambda}-u)(x), 0\right\}.
$$
Since $u \in \Sigma_s$ is a solution to \eqref{equ}, then
\begin{align} \label{s1}
\begin{split}
&\int_{\Sigma_{\lambda}} (-\Delta)^s (u_{\lambda}-u)(u_{\lambda}-u)^+ \,dx +\int_{\Sigma_{\lambda}}\omega (u_{\lambda}-u) (u_{\lambda}-u)^+ +\left(|x_{\lambda}|^2u_{\lambda}-|x|^2 u\right) (u_{\lambda}-u)^+ \,dx \\
&=\int_{\Sigma_{\lambda}} (|u_{\lambda}|^{p-2} u_{\lambda}-|u|^{p-2} u) (u_{\lambda}-u)^+ \,dx.
\end{split}
\end{align}
Observe that
%$$
%w(x)=-w(x_{\lambda}), \quad x \in \R^n,
%$$
\begin{align} \label{s2}
\begin{split}
%\left(\int_{\R^n} |w|^{\frac{2n}{n-2s}} \,dx \right)^{\frac{n-2s}{n}} 
\int_{\R^n} |(-\Delta)^{\frac s 2} w_{\lambda}|^2 \,dx
&=\int_{\R^n} (-\Delta)^s w_{\lambda} w_{\lambda} \,dx=\int_{\Sigma_{\lambda}}  (-\Delta)^s w_{\lambda} w_{\lambda} \,dx +\int_{\R^n \backslash \Sigma_{\lambda}}  (-\Delta)^s w_{\lambda} w_{\lambda}  \,dx \\
&=2\int_{\Sigma_{\lambda}}  (-\Delta)^s w_{\lambda} w_{\lambda}  \,dx \leq 2 \int_{\Sigma_{\lambda}} (-\Delta)^s (u_{\lambda}-u) (u_{\lambda}-u)^+ \,dx.
\end{split}
\end{align}
Since $\lambda \leq 0$, then $|x| \leq |x_{\lambda}|$ for $x \in \Sigma_{\lambda}$ and 
\begin{align} \label{s11}
\int_{\Sigma_{\lambda}}\left(|x_{\lambda}|^2u_{\lambda}-|x|^2 u\right) (u_{\lambda}-u)^+ \,dx \geq \int_{\Sigma_{\lambda}}|x|^2\left(u_{\lambda}- u\right) (u_{\lambda}-u)^+ \,dx=\int_{\Sigma_{\lambda}}|x|^2 |w_{\lambda}|^2 \,dx.
\end{align}
Moreover, using the mean value theorem and \eqref{decay}, we can derive that
\begin{align} \label{s111}
\begin{split}
\int_{\Sigma_{\lambda}} (|u_{\lambda}|^{p-2} u_{\lambda}-|u|^{p-2} u) (u_{\lambda}-u)^+ \,dx &= \int_{\Sigma_{\lambda}} \frac{|u_{\lambda}|^{p-2} u_{\lambda}-|u|^{p-2} u}{(u_{\lambda}-u)^+} |(u_{\lambda}-u)^+|^2 \,dx \\
&\lesssim \int_{\Sigma_{\lambda}} |u_{\lambda}|^{p-2} |(u_{\lambda}-u)^+|^2 \,dx \\
&\lesssim \int_{\Sigma_{\lambda}} \frac{1} {|x_{\lambda}|^{(n+2s)(p-2)}} |w_{\lambda}|^2 \,dx \\
&\leq \left(\int_{\Sigma_{\lambda}} \frac{1} {|x_{\lambda}|^{(n+2s)p}}\,dx \right)^{\frac{p-2}{p}} \left(\int_{\Sigma_{\lambda}} |w_{\lambda}|^{p} \,dx\right)^{\frac{2}{p}},
\end{split}
\end{align}
where
$$
\int_{\Sigma_{\lambda}} \frac{1} {|x_{\lambda}|^{(n+2s)p}}\,dx =\int_{\R^n \backslash \Sigma_{\lambda}} \frac{1} {|x|^{(n+2s)p}}\,dx \leq \int_{\R^n \backslash B_{|\lambda|}(0)} \frac{1} {|x|^{(n+2s)p}}\,dx = \frac{C}{|\lambda|^{(n+2s)p-n}},
$$
where $C=C(n, s, p)>0$. Consequently, we conclude from \eqref{s111} that
\begin{align} \label{s31}
\int_{\Sigma_{\lambda}} (|u_{\lambda}|^{p-2} u_{\lambda}-|u|^{p-2} u) (u_{\lambda}-u)^+ \,dx  \lesssim \frac{1}{|\lambda|^{(n+2s)p-n}}\left(\int_{\Sigma_{\lambda}} |w_{\lambda}|^{p} \,dx\right)^{\frac{2}{p}}.
\end{align}
Combining \eqref{s1}, \eqref{s2}, \eqref{s11} and \eqref{s31} and applying the embedding inequality in $\Sigma_s$, we then obtain that
\begin{align*} %\label{s10}
%\begin{split}
\left(\int_{\Sigma_{\lambda}} |w|^{p} \,dx\right)^{\frac{2}{p}} 
& \lesssim  \int_{\R^n} |(-\Delta)^{\frac s 2} w_{\lambda}|^2 \,dx + \int_{\R^n} \left(\omega +|x|^2\right) |w_{\lambda}|^2 \,dx  \\
& \lesssim \int_{\Sigma_{\lambda}} (|u_{\lambda}|^{p-2} u_{\lambda}-|u|^{p-2} u) (u_{\lambda}-u)^+ \,dx \lesssim \left(\frac{1}{|\lambda|^{(n+2s)p-n}}\right)^{\frac{p-2}{p}}\left(\int_{\Sigma_{\lambda}} |w_{\lambda}|^{p} \,dx\right)^{\frac{2}{p}}.
%\end{split}
\end{align*}
Now choosing $|\lambda|>0$ large enough, we then reach a contradiction from above. This in turn infers that $w_{\lambda}=0$ in $\Sigma_{\lambda}$ for $|\lambda|>0$ large enough. Therefore, we conclude that $u_{\lambda} \leq u$ in $\Sigma_{\lambda}$ for $|\lambda|>0$ large enough. 
%Moreover, we see that there exists $\widetilde{\lambda}>0$ large enough such that $u(x)<u_{\widetilde{\lambda}}(x)$ for some $x \in \Sigma_{\widetilde{\lambda}}$. 
As a consequence, there holds that the set $A_{\lambda}$ is not empty and $\lambda_0$ is finite defined by \eqref{defl}. 

Next we are going to demonstrate that $u = u_{\lambda_0}$ in $\Sigma_{\lambda_0}$. Suppose by contradiction that $u \neq u_{\lambda_0}$ and $u_{\lambda_0} \leq u$ in $\Sigma_{\lambda_0}$. If there exists $x_0 \in \Sigma_{\lambda_0}$ such that $u(x_0)=u_{\lambda_0}(x_0)$, by applying the fact that $u$ is a solution to \eqref{equ}, then
\begin{align} \label{s22}
\begin{split}
(-\Delta)^s u_{\lambda_0}(x_0)-(-\Delta)^s u(x_0) \leq 0,
\end{split}
\end{align}
because of $|x| \leq |x_{\lambda_0}|$.
However, we find that
\begin{align} \nonumber
(-\Delta)^s u_{\lambda_0}(x_0)-(-\Delta)^s u(x_0) \label{s23}
&=C(n, s) \int_{\R^n} \frac{u(y)-u_{\lambda_0}(y)}{|x-y|^{n+2s}} \,dy \\
&=C(n, s) \left(\int_{\Sigma_{\lambda_0}} \frac{u(y)-u_{\lambda_0}(y)}{|x_0-y|^{n+2s}} \,dy+\int_{\R^n\backslash \Sigma_{\lambda_0}} \frac{u(y)-u_{\lambda_0}(y)}{|x_0-y|^{n+2s}} \,dy \right) \\ \nonumber
&=C(n, s) \int_{\Sigma_{\lambda_0}} \left(u(y)-u_{\lambda_0}(y)\right)\left(\frac{1}{|x_0-y|^{n+2s}} -\frac{1}{|x_0-y_{\lambda_0}|^{n+2s}}\right) \,dy>0.
\end{align}
Hence we reach a contradiction from \eqref{s22} and \eqref{s23}. This obviously shows that $u_{\lambda_0}<u$ in $\Sigma_{\lambda_0}$. To derive the desired conclusion, it suffices to verify that there exists $\eps>0$ small enough such that $u_{\lambda}<u$ in $\Sigma_{\lambda}$ for any $\lambda_0<\lambda<\lambda_0+\eps$. Let us denote $P=(\lambda, 0, \cdots, 0) \in \R^n$. In view of \eqref{s1}, \eqref{s2} and \eqref{s11} and the embedding inequality in $\Sigma_s$, we know that there exists $C>0$ such that
\begin{align} \label{s3}
\left(\int_{\Sigma_{\lambda}} |w|^{p} \,dx\right)^{\frac{2}{p}} \leq C \int_{\Sigma_{\lambda}} (|u_{\lambda}|^{p-2} u_{\lambda}-|u|^{p-2} u) (u_{\lambda}-u)^+ \,dx.
\end{align}
It is simple to compute that
\begin{align} \nonumber 
\int_{\Sigma_{\lambda} \cap B_R(P)} (|u_{\lambda}|^{p-2} u_{\lambda}-|u|^{p-2} u) (u_{\lambda}-u)^+ \,dx &=\int_{\Sigma_{\lambda} \cap B_R(P)} \frac{|u_{\lambda}|^{p-2} u_{\lambda}-|u|^{p-2} u} {(u_{\lambda}-u)^+} (u_{\lambda}-u)^+\,dx \\  \label{s34}
&\lesssim \int_{\Sigma_{\lambda} \cap B_R(P) \cap supp (u_{\lambda}-u)} |w_{\lambda}|^2\,dx \\ \nonumber
& \leq \left|\Sigma_{\lambda} \cap B_R(P) \cap supp (u_{\lambda}-u)\right|^{\frac{p-2}{p}} 
\left(\int_{\Sigma_{\lambda}} |w_{\lambda}|^p\,dx\right)^{\frac 2p},
\end{align}
where we used the mean value theorem and the fact that $u \in L^{\infty}(\R^n)$. Let $R>0$ and $R_0>0$ be such that $\Sigma_{\lambda} \backslash B_R(P) \subset \R^n \backslash B_{R_0}(0)$, which are determined later. Therefore, by invoking the mean value theorem and \eqref{decay} as previously, we are able to obtain that
\begin{align*}
\int_{\Sigma_{\lambda} \backslash B_R(P)} (|u_{\lambda}|^{p-2} u_{\lambda}-|u|^{p-2} u) (u_{\lambda}-u)^+ \,dx  &\lesssim \int_{\Sigma_{\lambda} \backslash B_R(P)} |u_{\lambda}|^{p-2} |w_{\lambda}|^2 \,dx \\
& \lesssim\left(\int_{\R^n \backslash B_{R_0}(0)} \frac{1} {|x_{\lambda}|^{(n+2s)p}}\,dx \right)^{\frac{p-2}{p}} \left(\int_{\Sigma_{\lambda}} |w_{\lambda}|^{p} \,dx\right)^{\frac{2}{p}} \\
& \lesssim  \left(\frac{1}{R_0 ^{(n+2s)p-n}}\right)^{\frac{p-2}{p}} \left(\int_{\Sigma_{\lambda}} |w_{\lambda}|^{p} \,dx\right)^{\frac{2}{p}}.
\end{align*}
Now we choose $R_0>0$ large enough such that
\begin{align} \label{s311}
\int_{\Sigma_{\lambda} \backslash B_R(P)} (|u_{\lambda}|^{p-2} u_{\lambda}-|u|^{p-2} u) (u_{\lambda}-u)^+ \,dx <\frac {1} {4C}\left(\int_{\Sigma_{\lambda}} |w_{\lambda}|^{p} \,dx\right)^{\frac{2}{p}}. 
\end{align}
Then we choose $R>0$ such that  $\Sigma_{\lambda} \backslash B_R(P) \subset \R^n \backslash B_{R_0}(0)$. Moreover, applying \eqref{s34}, we choose $\eps>0$ small enough such that
\begin{align} \label{s321}
\int_{\Sigma_{\lambda} \cap B_R(P)} (|u_{\lambda}|^{p-2} u_{\lambda}-|u|^{p-2} u) (u_{\lambda}-u)^+ \,dx < \frac {1} {4C} \left(\int_{\Sigma_{\lambda}} |w|^p\,dx\right)^{\frac 2p}.
\end{align}
Accordingly, from \eqref{s311} and \eqref{s321}, there holds that
$$
\int_{\Sigma_{\lambda}} (|u_{\lambda}|^{p-2} u_{\lambda}-|u|^{p-2} u) (u_{\lambda}-u)^+ \,dx < \frac{1}{2C} \left(\int_{\Sigma_{\lambda}} |w|^p\,dx\right)^{\frac 2p}.
$$
Coming back to \eqref{s3}, we then have that $w_{\lambda}=0$ in $\Sigma_{\lambda}$. This readily indicates that $u=u_{\lambda_0}$ in $\Sigma_{\lambda_0}$. Further, by \eqref{s22} and \eqref{s23}, we can get that $\lambda_0=0$. Indeed, if $\lambda_0<0$, then
$$
(-\Delta)^s u_{\lambda_0}(x_0)-(-\Delta)^s u(x_0)<0.
$$ 
This clearly contradicts \eqref{s23}. Let $0 \leq x_1<x_1'$, then $-x_1'<-x_1<0$. Define 
$$
\lambda:=\frac{-x_1-x_1'}{2}<0.
$$ 
Since $u_{\lambda} < u$ in $\Sigma_{\lambda}$, then
$$
u(-x_1', 0, \cdots, 0)=u_{\lambda}((-x_1',  0, \cdots, 0)_{\lambda}) < u((-x_1', 0, \cdots, 0)_{\lambda})=u_{\lambda}(-x_1,  0, \cdots, 0)<u(-x_1,  0, \cdots, 0).
$$
It then follows that $u(x_1',  0, \cdots, 0)<u(x_1,  0, \cdots, 0)$, due to $u_{\lambda_0}=u$ in $\Sigma_{\lambda_0}$ and $\lambda_0=0$. At this moment, repeating the arguments above in any direction, we then get that $u$ is radially symmetric and decreasing. Thus the proof is completed.
\end{proof}

\begin{thm} \label{nondegeneracy}
Let $n \geq 1$, $0<s<1$, $\omega>-\lambda_{1,s}$ and $2<p<2_s^*$. Let $u \in \Sigma_s$ be a positive solution to \eqref{equ}. Then there holds that $Ker[\mathcal{L}_{+, s}]=0$, where
$$
\mathcal{L}_{+, s}=(-\Delta)^s + \left(\omega+|x|^2\right) -(p-1)|u|^{p-2}.
$$
%Then there exists $\overline{\Lambda}>0$ such that
%$$
%\int_{\R^n} |(-\Delta)^{\frac s 2} v|^2 \,dx+ \int_{\R^n} \left(\omega +|x|^2\right) |v|^2\,dx-(p-1) \int_{\R^n} |u|^{p-2} |v|^2 \,dx \geq \overline{\Lambda} \int_{\R^2}  |u|^{p-2} |v|^2 \,dx , \quad \forall \, v \in \mathcal{M}_{u},
%$$
%where
%$$
%\mathcal{M}_u:=\left\{ v \in \Sigma_s : \int_{\R^n} |u|^p v \,dx=\int_{\R^n} |u|^{p-1} (\partial_{x_i} u) v \,dx=0, \,\, i=1, \cdots, n\right\}.
%$$
\end{thm}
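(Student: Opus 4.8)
The plan is to deduce the non-degeneracy of $u$ from a spectral gap estimate for a weighted linear eigenvalue problem. By Theorem~\ref{symmetry} we may assume that $u$ is radially symmetric and decreasing; moreover, by the bootstrap and decay estimates obtained in the proof of that result, $u\in C^{\infty}(\R^n)\cap L^{\infty}(\R^n)$ and $u(x)\lesssim |x|^{-(n+2s)}$. I first record the structure of the eigenvalue problem~\eqref{eque}. Since $\omega>-\la_{1,s}$, the quadratic form $\psi\mapsto [\psi]_s^2+\int_{\R^n}(\omega+|x|^2)|\psi|^2\,dx$ is coercive on $\Sigma_s$, and since $u^{p-2}\in L^{\infty}(\R^n)$ decays rapidly while $\Sigma_s$ embeds compactly into $L^q(\R^n)$ for $2\le q<2_s^*$, the functional $\psi\mapsto\int_{\R^n}u^{p-2}|\psi|^2\,dx$ is weakly continuous on $\Sigma_s$. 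Hence~\eqref{eque} admits a nondecreasing sequence of eigenvalues $0<\Lambda_1<\Lambda_2\le\Lambda_3\le\cdots\to\infty$ whose eigenfunctions form an orthogonal basis of $L^2(\R^n,u^{p-2}\,dx)$; $\Lambda_1$ is simple with a positive eigenfunction, and every eigenfunction associated with $\Lambda_k$, $k\ge 2$, is orthogonal to it in $L^2(\R^n,u^{p-2}\,dx)$, hence must change sign. Since $u$ is a positive solution of~\eqref{equ}, it is the first eigenfunction and $\Lambda_1=1$.

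The non-degeneracy now reduces to the estimate $\Lambda_2>p-1$. Indeed, if $v\in Ker[\mathcal{L}_{+,s}]$ is nontrivial, then $v$ solves~\eqref{eque} with $\Lambda=p-1$, so $p-1$ is an eigenvalue of~\eqref{eque}; as $p>2$ forces $p-1>1=\Lambda_1$, we get $p-1\ge\Lambda_2$. Conversely, if $\Lambda_2>p-1$ then $p-1$ is not an eigenvalue and $Ker[\mathcal{L}_{+,s}]=0$. Suppose, for contradiction, that $\Lambda_2\le p-1$, and fix an eigenfunction $\phi\in\Sigma_s$ for $\Lambda_2$; then $\phi$ changes sign and $\int_{\R^n}u^{p-1}\phi\,dx=0$.

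The core of the argument is to show first that $\phi$ cannot be radial, and then to contradict $\Lambda_2\le p-1$. For the non-radiality, one exploits that $u$, $|x|^2$ and the weight $u^{p-2}$ are all radial, so the minimization defining $\Lambda_2$ is compatible with two-point rearrangements (polarizations) about hyperplanes through the origin; combining this compatibility with the behavior of the Gagliardo seminorm and of the orthogonality constraint under such rearrangements — in the spirit of the polarization arguments of~\cite{BBDG} — one shows that no eigenfunction associated with $\Lambda_2$ can be radial, i.e.\ the second radial eigenvalue of~\eqref{eque} is $>p-1$, so that under the standing assumption $\Lambda_2$ is realized only outside the radial sector. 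Granting that $\phi$ is non-radial, one passes to the ground-state substitution $\phi=u\,\xi$, where $\xi:=\phi/u\in L^{\infty}(\R^n)$ by elliptic regularity and the matching decay of $u$ and $\phi$; this recasts the eigenvalue relation as the identity
\[
\frac{C(n,s)}{2}\iint_{\R^n\times\R^n}\frac{u(x)u(y)\,|\xi(x)-\xi(y)|^2}{|x-y|^{n+2s}}\,dx\,dy=(\Lambda_2-1)\int_{\R^n}u^{p}\,\xi^2\,dx ,
\]
with $\xi$ non-constant, non-radial and satisfying $\int_{\R^n}u^{p}\,\xi\,dx=0$. Comparing $\xi$ with the non-radial functions $\pa_{x_i}u/u$, for which the identity $\mathcal{L}_{+,s}(\pa_{x_i}u)=-2x_i u$ records the obstruction created by the harmonic potential, and invoking the Picone-type inequality of~\cite{FW} for the weighted nonlocal form above, one is forced to conclude $\Lambda_2-1>p-2$, a contradiction. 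Therefore $\Lambda_2>p-1$ and $Ker[\mathcal{L}_{+,s}]=0$.

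I expect the non-radiality of $\phi$ to be the main obstacle. In the potential-free problem~\eqref{equ11} the translations $\pa_{x_i}u$ span the kernel and encode the relevant broken symmetry, making the angular type of the second eigenfunction transparent; here translation invariance is destroyed by the term $|x|^2$ and there are no explicit kernel elements, so the symmetry of $\phi$ must be extracted purely from the variational structure through a polarization analysis that simultaneously uses the radial monotonicity of $u$ and the nonlocality of $(-\Delta)^s$. A second delicate point is to make the ensuing Picone comparison quantitative enough to strictly beat the critical threshold $p-2$, rather than merely recovering the trivial bound $\Lambda_2>1$.
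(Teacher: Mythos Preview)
Your high--level strategy coincides with the paper's: reduce non-degeneracy to the spectral gap $\Lambda_2>p-1$, prove that every second eigenfunction is non-radial via polarization in the spirit of~\cite{BBDG}, and then reach a contradiction through a Picone-type comparison borrowed from~\cite{FW}. The first two steps, though only sketched, are indeed what the paper does. The genuine gap is in your execution of the Picone step.

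The ground-state substitution $\phi=u\,\xi$ and the resulting identity
\[
\frac{C(n,s)}{2}\iint_{\R^n\times\R^n}\frac{u(x)u(y)\,|\xi(x)-\xi(y)|^2}{|x-y|^{n+2s}}\,dx\,dy=(\Lambda_2-1)\int_{\R^n}u^{p}\xi^2\,dx
\]
are correct, but they merely re-express the eigenvalue relation; nothing here produces a lower bound involving $p-2$. Your proposed ``comparison of $\xi$ with $\partial_{x_i}u/u$'' does not go through as stated: $\partial_{x_i}u$ changes sign on $\R^n$, so it cannot serve as the positive barrier that any Picone inequality requires, and the forcing term in $\mathcal{L}_{+,s}(\partial_{x_i}u)=-2x_i u$ has no sign relation to a general sign-changing eigenfunction $\phi$. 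You yourself flag this as the ``delicate point'', and no mechanism is supplied to cross the threshold $p-2$.

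The paper resolves this by antisymmetrizing first. For a unit vector $\mu$ one sets $\psi_\mu:=\tfrac12(\phi-\phi\circ\sigma_\mu)$; since $u$ and $|x|^2$ are radial, $\psi_\mu$ is again an eigenfunction for $\Lambda_2$ and is odd with respect to $\sigma_\mu$. On the half-space $\{x\cdot\mu>0\}$ the function $\varphi:=-\nabla u\cdot\mu$ is strictly positive (radial monotonicity plus the maximum principle), and the half-space Picone inequality of~\cite{FW} applied to the odd pair $(\psi_\mu,\varphi)$ yields
\[
(\Lambda_2-(p-1))\int_{\R^n}u^{p-2}|\psi_\mu|^2\,dx\;\ge\;\int_{\{x\cdot\mu>0\}\times\{y\cdot\mu>0\}}H^{\mu}_{\psi_\mu,\varphi}\,dx\,dy\;\ge\;0,
\]
forcing $\psi_\mu\equiv0$ for every $\mu$, hence $\phi$ radial --- contradicting the non-radiality from the polarization step. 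The structural point your proposal is missing is precisely this antisymmetrization: it (i) preserves the eigenvalue equation, (ii) manufactures an odd test function to which the Fall--Weth lemma applies, and (iii) turns $-\partial_\mu u$ into a legitimate \emph{positive} comparison function on the relevant half-space.
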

\begin{proof}
Let $u \in \Sigma_s$ be a positive solution to \eqref{equ}. It follows from Theorem \ref{symmetry} that $u$ is radially symmetric and decreasing. First we introduce the following eigenvalue problem,
\begin{align} \label{equ1}
(-\Delta)^s v +\left(\omega +|x|^2\right) v =\Lambda |u|^{p-2} v, \quad v \in \Sigma_s.
\end{align}
It is obvious that there exists a sequence of eigenvalues $0<\Lambda_1<\Lambda_2 \leq \Lambda_3\leq \cdots $, where $\Lambda_1$ is given by
$$
\Lambda_1=\inf \left\{\int_{\R^n} |(-\Delta)^{\frac s 2} v|^2 \,dx+ \int_{\R^n} \left(\omega +|x|^2\right) |v|^2\,dx : v \in \Sigma_s, \int_{\R^n} |v|^2 |u|^{p-2} \,dx=1\right\}.
$$
It is well-known that $\Lambda_1$ is the first eigenvalue, which is simple and possesses positive eigenfunctions. 
Moreover, eigenfunctions corresponding to the eigenvalues $\Lambda_i$ for $i \geq 2$ are orthogonal to eigenfunctions corresponding to $\Lambda_1$ with respect to the scalar product in $L^2(\R^n, |u|^{p-2} \,dx)$, which indeed changes sign. Since $u \in \Sigma_s$ solves \eqref{equ} and it is positive, then it is an eigenfunction corresponding to $\Lambda_1$. This leads to $\Lambda_1 = 1$. Therefore, the second eigenvalue can be denoted by
$$
\Lambda_2=\inf \left\{\int_{\R^n} |(-\Delta)^{\frac s 2} v|^2 \,dx+ \int_{\R^n} \left(\omega +|x|^2\right) |v|^2\,dx : v \in \Sigma_s, \int_{\R^n} |v|^2 |u|^{p-2} \,dx=1, \int_{\R^n} v |u|^{p-1} \,dx=0\right\}.
$$

In the following, we shall prove that any eigenfunction corresponding to $\Lambda_2$ is not radially symmetric. Let $v \in \Sigma_s$ be an eigenfunction corresponding to $\Lambda_2$. On the contrary, we shall assume that $v$ is radially symmetric, i.e. there exists a function $\rho: [0, \infty) \to \R$ such that $\rho(|x|)=v(x)$ for $x \in \R^n$. It follows from \cite[Proposition 7]{SS} that $v$ changes sign only once. Let $r>0$ be such that $\rho(r)=0$ and $\rho(t)>0$ for $t>r$. Let $\lambda>0$ and $P_{\lambda} v$ be the polarization of $v$ with respect to the half space $H_{\lambda}$, where
\begin{align} \label{defpl}
P_{\lambda} v(x):=
\left\{
\begin{aligned}
&\max \left\{v(x), v(\sigma_{\lambda}(x))\right\}, \quad &x \in H_{\lambda},\\
&\min \left\{v(x), v(\sigma_{\lambda}(x))\right\}, \quad &x \not\in H_{\lambda},
\end{aligned}
\right.
\end{align}
$$
H_{\lambda}:=\left\{ x \in \R^n : x_1<\lambda \right\}, \quad \sigma_{\lambda}(x):=(2\lambda-x_1, x_2, \cdots, x_n).
$$
Since $v$ changes sign, then $P_{\lambda} v$ changes sign. From the definition of the polarization, we can see that 
\begin{align} \label{p1}
(P_{\lambda} v)^{\pm}=P_{\lambda} v^{\pm},
\end{align}
where the positive and negative parts of a function $\phi : \R^n \to \R$ are respectively defined by
$$
\phi^+:=\max \left\{\phi(x), 0\right\}, \quad \phi^-:=\min \left\{\phi(x), 0\right\}, \quad \phi=\phi^++\phi^-.
$$
%Taking into account the polarization arguments in \cite{Br}, we get that
%Observe that
%\begin{align*} % \label{p2}
%P_{\lambda} v(x):=
%\left\{
%\begin{aligned}
%&\frac 12 \left (v(x) + v(\sigma_{\lambda}(x))\right) + \frac 12 \left|v(x) - v(\sigma_{\lambda}(x))\right|, \quad &x \in H_{\lambda},\\
%&\frac 12 \left (v(x) + v(\sigma_{\lambda}(x))\right) - \frac 12 \left|v(x) - v(\sigma_{\lambda}(x))\right|, \quad &x \not\in H_{\lambda}.
%\end{aligned}
%\right.
%\end{align*}
%It follows from \cite[Lemma 2.2]{SW} that
In view of the definition of $P_{\lambda}$, we then write that
\begin{align*} % \label{p2}
P_{\lambda} v(x):=
\left\{
\begin{aligned}
&\frac 12 \left (v(x) + v(\sigma_{\lambda}(x))\right) + \frac 12 \left|v(x) - v(\sigma_{\lambda}(x))\right|, \quad x \in H_{\lambda},\\
&\frac 12 \left (v(x) + v(\sigma_{\lambda}(x))\right) - \frac 12 \left|v(x) - v(\sigma_{\lambda}(x))\right|, \quad x \not\in H_{\lambda}.
\end{aligned}
\right.
\end{align*}
It then follows that
\begin{align} \label{invariant}
\begin{split}
\int_{\R^N} |P_{\lambda} v^+|^2 \,dx &=\int_{H_{\lambda}} |P_{\lambda} v^+|^2 \,dx + \int_{\R^n \backslash H_{\lambda}} |P_{\lambda} v^+|^2 \,dx \\
& = \frac 12 \int_{H_{\lambda}} |v^+|^2 + |v^+(\sigma_{\lambda}\cdot)|^2 \,dx + \frac 12 \int_{H_{\lambda}} \left||v^+|^2- |v^+(\sigma_{\lambda}\cdot)|^2\right| \,dx \\
& \quad + \frac 12 \int_{\R^n \backslash H_{\lambda}} |v^+|^2 + |v^+(\sigma_{\lambda}\cdot)|^2 \,dx- \frac 12 \int_{\R^n \backslash  H_{\lambda}} \left||v^+|^2- |v^+(\sigma_{\lambda}\cdot)|^2\right| \,dx \\
&= \int_{\R^n} |v^+|^2  \,dx.
\end{split}
\end{align}
Similarly, we can conclude that
$$
\int_{\R^n} |P_{\lambda} v^-|^2 \,dx=\int_{\R^n} |v^-|^2 \,dx.
$$
Now we assert that
$$
\int_{\R^n} |x|^2 |P_{\lambda} v^+|^2 \,dx \leq \int_{\R^n}|x|^2 |v^+|^2 \,dx.
$$
Let $R>0$ and $V_R(|x|):=\min \left\{|x|^2, R^2\right\}$ for $x \in \R^n$. Then we get that the function $x \mapsto -V_R(|x|) +R^2$ is positive, decreasing and radially symmetric. By the definition of $P_{\lambda}$ given by \eqref{defpl}, then it is invariant under the polarization $P_{\lambda}$, i.e. $P_{\lambda} (-V_R +R^2)=-V_R +R^2$. It then follows from \cite[Lemma 3]{Br} that
$$
\int_{\R^d} \left(-V_R +R^2\right) |v^+|^2 \,dx \leq \int_{\R^d} P_{\lambda}(-V_R +R^2) |P_{\lambda} v^+|^2 \,dx=\int_{\R^d} \left(-V_R +R^2\right) |P_{\lambda} v^+|^2 \,dx.
$$
This along with \eqref{invariant} gives that
$$
\int_{\R^n} V_R |P_{\lambda} v^+|^2 \,dx \leq \int_{\R^n} V_R |v^+|^2 \,dx.
$$
Passing the limit as $R$ goes to infinity, we then have the desired conclusion. Similarly, we can derive that
$$
\int_{\R^n} |x|^2|P_{\lambda} v^-|^2 \,dx \leq \int_{\R^n} |x|^2 |v^-|^2 \,dx.
$$
Consequently, there holds that
%Moreover, there holds that
%\begin{align*}
%\int_{\R^N}|x|^2 |P_{\lambda} v^+|^2 \,dx &=\int_{H_{\lambda}} |x|^2 |P_{\lambda} v^+|^2 \,dx + \int_{\R^n \backslash H_{\lambda}} |x|^2 |P_{\lambda} v^+|^2 \,dx \\
%& \leq \int_{H_{\lambda}}|\sigma_{\lambda} x|^2 |P_{\lambda} v^+|^2 \,dx + \int_{H_{\lambda}} |\sigma_{\lambda}x|^2 |P_{\lambda} v^+(\sigma_{\lambda}\cdot)|^2 \,dx
%& = \frac 12 \int_{H_{\lambda}} |x|^2 \left(|v^+|^2 + |v^+(\sigma_{\lambda}\cdot)|^2 \right) \,dx + \frac 12 \int_{H_{\lambda}} |x|^2 \left||v^+|^2- |v^+(\sigma_{\lambda}\cdot)|^2\right| \,dx \\
%& \quad + \frac 12 \int_{\R^n \backslash H_{\lambda}} |x|^2 \left( |v^+|^2 + |v^+(\sigma_{\lambda}\cdot)|^2 \right)\,dx- \frac 12 \int_{\R^n \backslash  H_{\lambda}} |x|^2 \left||v^+|^2- |v^+(\sigma_{\lambda}\cdot)|^2\right| \,dx\\
%& \geq \frac 12 \int_{\R^n} |x|^2 |v^+|^2 \,dx +  \frac 12 \int_{\R^n} |x|^2 |v^+(\sigma_{\lambda}\cdot)|^2 \,dx
%\end{align*}
$$
\int_{\R^N} |P_{\lambda} v^{\pm}|^2 \,dx =\int_{\R^N} |v^{\pm}|^2 \,dx, \quad \int_{\R^N}|x|^2 |P_{\lambda} v^{\pm}|^2 \,dx \leq \int_{\R^N} |x|^2 |v^{\pm}|^2 \,dx.
$$
This together with \eqref{p1} then leads to
\begin{align} \label{n21}
\int_{\R^n} (P_{\lambda} v)^{\pm} P_{\lambda} v \,dx=\int_{\R^n} |(P_{\lambda} v)^{\pm}|^2\,dx=\int_{\R^n} |P_{\lambda} v^{\pm}|^2 \,dx=\int_{\R^n} |v^{\pm}|^2\,dx=\int_{\R^n} v v^{\pm} \,dx,
\end{align}
\begin{align} \label{n22}
\begin{split}
\int_{\R^n} |x|^2(P_{\lambda} v)^{\pm} P_{\lambda} v \,dx &=\int_{\R^n} |x|^2 |(P_{\lambda} v)^{\pm}|^2\,dx=\int_{\R^n} |x|^2 |P_{\lambda} v^{\pm}|^2 \,dx \\
& \leq \int_{\R^n} |x|^2 |v^{\pm}|^2 \,dx=\int_{\R^n}|x|^2 v v^{\pm} \,dx.
\end{split}
\end{align}
Since $u$ is radially symmetric and decreasing, by the definition of $P_{\lambda}$ given by \eqref{defpl}, then it is invariant under the polarization $P_{\lambda}$, i.e. $P_{\lambda} u=u$. Applying \eqref{p1} and \cite[Lemma 2]{Br}, we then get that
\begin{align} \label{n23}
\begin{split}
\int_{\R^n} |u|^{p-2}(P_{\lambda}v)(P_{\lambda}v)^{\pm} \,dx
&=\int_{\R^n}  |u|^{p-2}  |(P_{\lambda} v)^{\pm}|^2\,dx=\int_{\R^n}|P_{\lambda} u|^{p-2} |P_{\lambda} v^{\pm}|^2 \,dx \\
& \geq \int_{\R^n}  |u|^{p-2} |v^{\pm}|^2\,dx=\int_{\R^n} |u|^{p-2} vv^{\pm} \,dx,
\end{split}
\end{align}
where $P_{\lambda} u>0$, due to $u>0$. In addition, making use of \cite[Lemma 2.2]{BBDG}, we know that
\begin{align} \label{n24}
\int_{\R^n} (-\Delta)^{\frac s 2} P_{\lambda} v  (-\Delta)^{\frac s 2} (P_{\lambda} v)^{\pm} \,dx \leq \int_{\R^n} (-\Delta)^{\frac s 2} v (-\Delta)^{\frac s 2}  v^{\pm} \,dx.
\end{align}
Since $v \in \Sigma_s$ solves \eqref{equ1} with $\Lambda=\Lambda_2$, then
\begin{align} \label{n2}
\int_{\R^n} (-\Delta)^{\frac s 2} v (-\Delta)^{\frac s 2}  v^{\pm} \,dx+ \int_{\R^n} \left( \omega +|x|^2\right) vv^{\pm} \,dx =\Lambda_2 \int_{\R^n} |u|^{p-2} vv^{\pm} \,dx.
\end{align}
Therefore, utilizing \eqref{n21}-\eqref{n24}, we are able to conclude from \eqref{n2} that
\begin{align*} %\label{n1}
\int_{\R^n} (-\Delta)^{\frac s 2} P_{\lambda} v (-\Delta)^{\frac s 2}  (P_{\lambda} v)^{\pm} \,dx+ \int_{\R^n} \left( \omega +|x|^2\right) (P_{\lambda}v)(P_{\lambda}v)^{\pm} \,dx \leq \Lambda_2 \int_{\R^n} |u|^{p-2}  (P_{\lambda}v)(P_{\lambda}v)^{\pm} \,dx.
\end{align*}
Since $u>0$, $P_{\lambda} v^+ \geq 0$ and $P_{\lambda} v^- \leq 0$ for $v^+:=\max \left\{v(x), 0\right\}$ and  $v^-:=\min \left\{v(x), 0\right\}$, by \eqref{p1}, then
$$
\int_{\R^n} (P_{\lambda} v)^{+} |u|^{p-1} \,dx =\int_{\R^n} (P_{\lambda} v^{+}) |u|^{p-1} \,dx>0,
$$
$$
\int_{\R^n} (P_{\lambda} v)^{-} |u|^{p-1} \,dx =\int_{\R^n} (P_{\lambda} v^{-}) |u|^{p-1} \,dx <0,
$$
where $v$ is sign-changing. As a consequence, we have that there exists $\alpha_1>0$ such that
$$
\int_{\R^n} \left((P_{\lambda} v)^{+} + \alpha_1(P_{\lambda} v)^- \right) |u|^{p-1} \,dx =0.
$$
Observe that
$$
\int_{\R^n} \left|(P_{\lambda} v)^{+} + \alpha_1(P_{\lambda} v)^- \right|^2 |u|^{p-2} \,dx= \int_{\R^n} \left(\left|(P_{\lambda} v)^{+} \right|^2+ \alpha^2_1 \left|(P_{\lambda} v)^- \right|^2 \right) |u|^{p-2} \,dx >0.
$$
Therefore, we know that there exists $\alpha_2>0$ such that
$$
\alpha_2^2 \int_{\R^n} \left|(P_{\lambda} v)^{+} + \alpha_1(P_{\lambda} v)^- \right|^2 |u|^{p-2} \,dx=1.
$$
This implies that $\alpha_2 \left((P_{\lambda} v)^{+} + \alpha_1(P_{\lambda} v)^-\right) \in \Sigma_s$ is a test function for the variational characterization of $\Lambda_2$. It then follows that
\begin{align*}
\Lambda_2 & \leq \alpha_2^2 \int_{\R^n} \left|(-\Delta)^{\frac s2} (P_{\lambda} v)^{+} \right|^2 \,dx +\alpha_2^2 \int_{\R^n} \left( \omega + |x|^2\right) \left|(P_{\lambda} v)^{+}\right|^2 \,dx \\
& \quad +\alpha_1^2  \alpha_2^2  \int_{\R^n} \left|(-\Delta)^{\frac s2} (P_{\lambda} v)^{-} \right|^2 \,dx +\alpha_1^2   \alpha_2^2 \int_{\R^n} \left( \omega + |x|^2\right) \left|(P_{\lambda} v)^{-}\right|^2 \,dx \\
& \quad + 2 \alpha_1 \alpha_2^2 \int_{\R^n}(-\Delta)^{\frac s2} (P_{\lambda} v)^{+} (-\Delta)^{\frac s2} (P_{\lambda} v)^{-}\,dx \\
& \leq \alpha_2^2 \Lambda_2 \int_{\R^n} |u|^{p-2} \left|(P_{\lambda}v)^{+}\right|^2 \,dx-\alpha_2^2 \int_{\R^n}(-\Delta)^{\frac s2} (P_{\lambda} v)^{+} (-\Delta)^{\frac s2} (P_{\lambda} v)^{-}\,dx \\
& \quad + \alpha_1^2 \alpha_2^2 \Lambda_2 \int_{\R^n} |u|^{p-2}\left|(P_{\lambda}v)^{-}\right|^2 \,dx-\alpha_1^2  \alpha_2^2\int_{\R^n}(-\Delta)^{\frac s2} (P_{\lambda} v)^{+} (-\Delta)^{\frac s2} (P_{\lambda} v)^{-}\,dx \\
&\quad + 2 \alpha_1 \alpha_2^2 \int_{\R^n}(-\Delta)^{\frac s2} (P_{\lambda} v)^{+} (-\Delta)^{\frac s2} (P_{\lambda} v)^{-}\,dx \\
&=\Lambda_2 \int_{\R^2} |u|^{p-2} \left|\alpha_2 \left((P_{\lambda} v)^{+} + \alpha_1(P_{\lambda} v)^- \right)\right|^2\,dx -\alpha_2^2 (1- \alpha_1)^2 \int_{\R^n}(-\Delta)^{\frac s2} (P_{\lambda} v)^{+} (-\Delta)^{\frac s2} (P_{\lambda} v)^{-}\,dx \\
&\leq \Lambda_2,
\end{align*}
where we used the fact that
$$
 \int_{\R^n}(-\Delta)^{\frac s2} (P_{\lambda} v)^{+} (-\Delta)^{\frac s2} (P_{\lambda} v)^{-}\,dx > 0.
$$
It then gives that $\alpha_1=1$. Therefore, we know that $P_{\lambda} v=(P_{\lambda}v)^{+}+(P_{\lambda}v)^{-}$ is an eigenfunction corresponding to $\Lambda_2$. Note that $\overline{x}=(r+2\lambda, 0, \cdots, 0)\not\in H_{\lambda}$ and $\sigma_{\lambda}(\overline{x})=(-r, 0, \cdots, 0)$. This then implies that
$$
P_{\lambda} v(\overline{x}) \leq v(\sigma_{\lambda}(\overline{x}))=\rho(|\sigma_{\lambda}(\overline{x})|)=0.
$$
On the other hand, since $-\overline{x} \in H_{\lambda}$ with $|\overline{x}|>r$, then
$$
P_{\lambda} v(-\overline{x}) \geq v(-\overline{x})=v(\overline{x})=\rho(|\overline{x}|)>0.
$$
Hence we can conclude that $P_{\lambda}v$ is not radial and $P_{\lambda} v\neq v$, because $v$ is assumed to be radially symmetric. Define $\psi_{\lambda}:=v-P_{\lambda} v$. Clearly, $\psi_{\lambda}  \in \Sigma_s$ is also an eigenfunction corresponding to $\Lambda_2$. Since $P_{\lambda} v \geq v$ on $H_{\lambda}$, then $\psi_{\lambda} \leq 0$ and $\psi_{\lambda} \neq 0$ on $H_{\lambda}$. Using maximum principle, we then know that $\psi_{\lambda}<0$ on $H_{\lambda}$. It then shows that $v<v \circ \sigma_{\lambda}$ on $H_{\lambda}$ for any $\lambda>0$. Thus we are able to conclude that $v$ is strictly increasing with respect to the $x_1$ direction in the half space $H_0$. We then reach a contradiction from the assumption $v$ is radial and the fact that $v$ is sign-changing and $v(x) \to 0$ as $|x| \to \infty$.

Next we are going to prove that $\Lambda_2 >p-1$. Suppose by contradiction that $\Lambda_2 \leq p-1$.
%and any eigenfunction $v \in \Sigma_s$ corresponding to $\Lambda_2$ is of form
%$$
%v(x)=c_1 \partial_{x_1} u +\cdots+c_n \partial_{x_n} u, \quad c_1, \cdots c_n \in \R.
%$$
%It is immediate to see that
%$$
%\Lambda_2 \leq p-1+\frac{\int_{\R^n} |u|^2 \,dx}{\int_{\R^n} |u|^{p-2} |\partial_{x_i} u|^2 \,dx}.
%$$
Let $v \in \Sigma_s$ be an eigenfunction corresponding to $\Lambda_2$. Define 
$$
T_{\mu}:=\left\{x \in \R^n : x \cdot \mu =0 \right\},
$$
where $\mu \in \R^n$ is a unit vector. Denote $\sigma_{\mu}$ be the reflection with respect to $T_{\mu}$. Define
$$
\psi_{\mu}:=\frac{v-v \circ \sigma_{\mu}}{2} \in \Sigma_s.
$$
Obviously, we find that $\psi_{\mu}$ enjoys the equation
\begin{align} \label{equn}
(-\Delta)^s \psi_{\mu} + \left(\omega +|x|^2\right) \psi_{\mu}=\Lambda_2 |u|^{p-2} \psi_{\mu}.
\end{align}
Define $\varphi:=-\nabla u \cdot \mu$. Then $\varphi \in \Sigma_s$ satisfies the equation
$$
(-\Delta)^s  \varphi + \left(\omega +|x|^2\right) \varphi +2 (x \cdot \mu) u=(p-1) |u|^{p-2} \varphi.
$$
Since $u$ is radially symmetric and decreasing, then $\varphi$ is odd with respect to the refection $T_{\mu}$ and $\varphi \geq 0$ in $\{x \in \R^n : x \cdot \mu >0\}$. It immediately follows from \cite[Lemma 5.3]{FW} that $\varphi>0$ on $\{x \in \R^n : x \cdot \mu > 0\}$ and $\nabla \varphi \cdot \mu >0$ in $\{x \in \R^n : x \cdot \mu =0\}$. Let $\chi : \R \to [0, 1]$ be a smooth cut-off function such that $\chi=1$ on $(-1, 1)$ and $\chi=0$ on $(-2, 2)$. Define
$$
\psi_{\mu}^R(x):=\psi_{\mu}(x) \chi\left(\frac{|x|}{R}\right), \quad R>0.
$$
Applying \cite[Lemma 2.1]{FW}, we then get that
$$
\int_{\R^n} |(-\Delta)^{\frac s 2} \psi_{\mu}^R|^2 \,dx+ \int_{\R^n} \left(\omega +|x|^2\right) |\psi_{\mu}^R|^2\,dx-(p-1) \int_{\R^n} |u|^{p-2} |\psi_{\mu}^R|^2 \,dx \geq \int_{\left\{x \cdot \mu >0\right\} \cap \left\{y \cdot \mu >0\right\}} H^{\mu}_{\psi_{\mu}^R, \varphi} \,dx,
$$
where
$$
H^{\mu}_{\psi_{\mu}^R, \varphi}(x, y):=C(N, s) \varphi(x) \varphi(y)\left(\frac{\psi_{\mu}^R(x)}{\varphi(x)}-\frac{\psi_{\mu}^R(y)}{\varphi(y)}\right)^2 \left(\frac{1}{|x-y|^{n+2s}}-\frac{1}{|\sigma_{\mu}(x)-y|^{n+2s}}\right).
$$
Taking the limit as $R \to \infty$, we then know that
$$
\int_{\R^n} |(-\Delta)^{\frac s 2} \psi_{\mu}|^2 \,dx+ \int_{\R^n} \left(\omega +|x|^2\right) |\psi_{\mu}|^2\,dx-(p-1) \int_{\R^n} |u|^{p-2} |\psi_{\mu}|^2 \,dx \geq \int_{\left\{x \cdot \mu >0\right\} \cap \left\{y \cdot \mu >0\right\}} H^{\mu}_{\psi_{\mu}, \varphi} \,dx \geq 0.
$$
Using \eqref{equn} and the assumption that $\Lambda_2 \leq p-1$, we then obtain that
$$
0 \geq \left(\Lambda_2-p+1\right)\int_{\R^n} |u|^{p-2} |\psi_{\mu}|^2 \,dx  \geq \int_{\left\{x \cdot \mu >0\right\} \cap \left\{y \cdot \mu >0\right\}} H^{\mu}_{\psi_{\mu}, \varphi} \,dx \geq 0.
$$
As a result, we have that $\psi_{\mu}=0$. Since $v$ is nonradial, then there exists a unit vector $\overline{\mu} \in \R^n$ such that $\psi_{\overline{\mu}} \neq 0$. We then reach a contradiction. This implies that $\Lambda_2>p-1$. Since $1=\Lambda_1<p-1<\Lambda_2$, then $Ker [\mathcal{L}_{s, +}]=0$ and the proof is completed.
\end{proof}

\begin{proof}[Proof of Theorem \ref{thm}]
From Theorems \ref{symmetry} and \ref{nondegeneracy}, we know that the conditions of Lemma 2.4 in \cite{Gou} hold true. Then, replacing the roles of ground states by positive solutions to \eqref{equ} and closely following the discussions in \cite{Gou}, we are able to conclude the desired conclusion. This completes the proof.
\end{proof}

\section*{Statements and Declarations}
\subsection*{Conflict of Interests}The authors declare that there are no conflict of interests.
\subsection*{Data Availability Statement}. Data sharing is not applicable to this article as no data sets were generated or analysed during the current study.
\subsection*{Acknowledgements}  
Tianxiang Gou was supported by the National Natural Science Foundation of China (No. 12101483) and the Postdoctoral Science Foundation of China (No. 2021M702620). 
The research of Vicen\c tiu D. R\u adulescu was supported by the grant ``Nonlinear Differential Systems in Applied Sciences" of the Romanian Ministry of Research, Innovation and Digitization, within PNRR-III-C9-2022-I8/22.


\begin{thebibliography}{99}

\bibitem{AT} C.J. Amick, J.F. Toland, \emph{Uniqueness and related analytic properties for the Benjamin-Ono equation a nonlinear Neumann problem in the plane}, Acta Math. 167 (1991), no. 1-2, 107--126.

\bibitem{BBDG} J. Benedikt, V. Bobkov, R.N. Dhara, P. Girg, \emph{Nonradiality of second eigenfunctions of the fractional Laplacian in a ball}, Proc. Amer. Math. Soc. 150 (12) (2022) 5335--5348.

\bibitem{Br} F. Brock, \emph{A general rearrangement inequality à la Hardy-Littlewood}, J. Inequal. Appl. 5 (2000), no. 4, 309--320. 

%\bibitem{DH} Z. Ding, H. Hajaiej, \emph{On a fractional Schr\"odinger equation in the presence of harmonic potential}, Electron. Res. Arch. 29 (2021), no. 5, 3449--3469. 

\bibitem{FV} M.M. Fall,  E. Valdinoci, \emph{Uniqueness and Nondegeneracy of Positive Solutions of $(-\Delta)^s u + u = u^p$ when $s$ is close to $1$}, Comm. Math. Phys. 329 (2014), no. 1, 383--404.

\bibitem{FW} M.M. Fall, T. Weth, \emph{Uniqueness and nondegeneracy of solutions to $(-\Delta)^s u + u=u^p$ in $\R^N$ and in balls}, arXiv:2310.10577.

\bibitem{FQT} P. Felmer, A.  Quaas, J. Tan, \emph{Positive solutions of nonlinear Schr\"odinger equation with the fractional Laplacian},  Proc. Roy. Soc. Edinburgh Sect. A 142 (2012), no. 6, 1237--1262.

\bibitem{FL} R.L. Frank,  E. Lenzmann, \emph{Uniqueness of non-linear ground states for fractional Laplacians in $\R$},  Acta Math. 210 (2013), no. 2, 261--318. 

\bibitem{FLS} R.L. Frank,  E. Lenzmann, L. Silvestre, \emph{Uniqueness of radial solutions for the fractional Laplacian}, Comm. Pure Appl. Math. 69 (2016), no. 9, 167--1726.

\bibitem{Gou} T. Gou,  \emph{Uniqueness of ground states to fractional nonlinear elliptic equations with harmonic potential}, arXiv:2208.12068, accepted in Proc. Roy. Soc. Edinburgh Sect. A for publication.

\bibitem{HO1} M. Hirose, M. Ohta, \emph{Structure of positive radial solutions to scalar field equations with harmonic potential}, J. Differential Equations 178 (2002), no. 2, 519--540.

\bibitem{HO2} M. Hirose, M. Ohta, \emph{Uniqueness of positive solutions to scalar field equations with harmonic potential}, Funkcial. Ekvac. 50 (2007), no. 1, 67--100.

%\bibitem{KT} Y. Kabeya, K. Tanaka, \emph{Uniqueness of positive radial solutions of semilinear elliptic equations in $\R^n$ and S\'er\'e's non-degeneracy condition}, Comm. Partial Differential Equations 24 (1999), no. 3-4, 563--598.

\bibitem{Kw} M.K. Kwong, \emph{Uniqueness of positive solutions of $\Delta u-u+u^p=0$ in $\R^n$}, Arch. Rational Mech. Anal. 105 (1989), no. 3, 243--266. 

\bibitem{L1} N. Laskin, \emph{Fractional quantum mechanics and L\'evy path integrals}, Phys. Lett. A 268 (2000), no. 4-6, 298--305. 

\bibitem{L2} N. Laskin, \emph{Fractional Schrödinger equation}, Phys. Rev. E (3) 66 (2002), no. 5, 056108, 7 pp.

%{\color{blue} \bibitem{SW} D. Smets, M. Willem, \emph{Partial symmetry and asymptotic behavior for some elliptic variational problems}, Calc. Var. Partial Differential Equations 18 (1) (2003) 57-75.}

\bibitem{SS} M. Stanislavova, A.G. Stefanov,  \emph{Ground states for the nonlinear Schr\"odinger equation under a general trapping potential}, J. Evol. Equ. 21 (2021), no. 1, 671--697. 

\bibitem{W} Michel Willem, \emph{Minimax Theorems}, Birkh\"auser Verlag, Boston, Basel, Berlin, 1996.

\bibitem{YY} E. Yanagida, S. Yotsutani, \emph{Pohozaev identity and its applications}, Sūrikaisekikenkyūsho Kōkyūroku 834 (1993), 80--90. 

\end{thebibliography}
\end{document}